\newcommand{\dd}{\mathrm{d}}
\newcommand{\diver}{\mathrm{div}}
\newcommand{\curl}{\mathrm{curl}}
\newcommand{\id}{\mathrm{Id}}
\newtheorem{lemma}{Lemma}
\newtheorem{theorem}{Theorem}
\title{An iterative algorithm for sparse and constrained recovery with applications to divergence-free current reconstructions in magneto-encephalography}
\author{Ignace Loris and Caroline Verhoeven}
\begin{document}
\maketitle
\begin{abstract}
We propose an iterative algorithm for the minimization of a $\ell_1$-norm penalized least squares functional, under additional linear constraints. The algorithm is fully explicit: it uses only matrix multiplications with the three matrices present in the problem (in the linear constraint, in the data misfit part and in penalty term of the functional). None of the three matrices must be invertible. Convergence is proven in a finite-dimensional setting.
We apply the algorithm to a synthetic problem in magneto-encephalography where it is used for the reconstruction of divergence-free current densities subject to a sparsity promoting penalty on the wavelet coefficients of the current densities. We discuss the effects of imposing zero divergence and of imposing joint sparsity (of the vector components of the current density) on the current density reconstruction.
\end{abstract}

\section{Introduction}

In magneto-encephalography (MEG) an image of an electrical current density is reconstructed from measurements of the magnetic field outside the scalp. The magnetic field generated by these currents is very weak compared to the environment; special precautions are taken to minimize these external effects on the observed data. Another characteristic of MEG imaging is the very low number of data: typically only a few hundred of measurements are taken. When using a current density representation of reasonable size (spatial resolution) these data are not complete. This implies having to solve an underdetermined system of equations. Extra conditions need to be imposed to define a unique current density reconstruction.

In \cite{FoP:2008} the use of a sparsity promoting penalty, together with an efficient representation of the current density in terms of wavelets \cite{Daubechies1992,Mallat2009} was proposed. In other words, the assumption that the unknown current density can be represented with a small number of non-zero wavelet coefficients, was used as a priori information to regularize the inversion.

The regularization of the MEG inverse problem by a sparsity assumption was carried out in practice in \cite{FoP:2008} by adding an $\ell_1$-norm penalty term to a quadratic cost function for the data misfit. The $\ell_1$-norm is a popular sparsity promoting penalty \cite{candes08:introCS,Bruckstein.Donoho.ea2009} as it allows for convex optimization techniques to be used instead of algorithms with combinatorial complexity. It was shown in \cite{Daubechies2004b} that such a penalty regularizes the linear inverse problem, and convergence of an iterative soft-thresholding algorithm for the minimization of an $\ell_1$-norm penalized least squares functional was proven in a Hilbert space setting. In the present work we extend the method of \cite{FoP:2008,Daubechies2004b} to incorporate linear constraints (e.g. to impose zero divergence on the reconstructed current densities) and to handle a more general sparsity promoting penalty.

In the first, mathematical, part of this paper we propose a new iterative algorithm for the minimization of an $\ell_1$-norm penalized least squares functional, under additional linear constraints:
\begin{equation}
\hat x=\arg\min_{Bx=b}\|Kx-y\|^2+2\lambda \|Ax\|_1.
\label{functional}
\end{equation}
Here $K$ is the matrix that defines the linear relation between the unknown model $x$ and the data $y$; $Bx=b$ is a linear constraint on the solution. $A$ is a matrix mixing the variables in the non-smooth penalty term $\|Ax\|_1$. It need not be invertible in our approach (e.g. $A=\mathrm{grad}$ would correspond to a total variation penalty \cite{Rudin.Osher.ea1992}). We write the variational equations corresponding to this problem and derive a simple iterative algorithm. This algorithm consists of a single loop and each step in the loop is given explicitly in terms of matrix multiplications by $K$, $A$ and $B$ (and their transposes). We prove the convergence of this algorithm for general $K$, $A$ and $B$ (subject to a bound on their norms) in a finite-dimensional setting. In problem (\ref{functional}) the $\ell_1$-norm penalty may be replaced by another convex function $H(Ax)$. The proposed algorithm can be modified to apply to this case as well, as long the proximity operator of $H$ is known.

The proposed algorithm reduces to the generalized iterative soft-thresholding algorithm of \cite{Loris.Verhoeven2011} when the linear constraints $Bx=b$ are removed. We will indicate below in which special cases (e.g. $A=\id$) our algorithm is also derivable by the method in \cite{Zhang.Burger.ea2011}. In these special cases we also comment on the conditions on the matrices that are necessary for guaranteeing convergence; in particular, we indicate where our conditions on the matrices $K$, $A$ or $B$ are less strict than the ones derivable from the work in \cite{Zhang.Burger.ea2011}.

In the second part of this paper, we apply the algorithm to an inverse problem loosely based on magneto-encephalography. We shall assume a linear relationship between an unknown current density $\vec{J}$ and a measured magnetic field $\vec{B}$ in the form of the Biot-Savart law. Furthermore we assume that the data are contaminated by Gaussian noise. As in \cite{FoP:2008} we will impose sparsity on the wavelet expansion of the current density $\vec{J}$. That is, we will use the proposed iterative algorithm to solve problem (\ref{functional}) where $x$ is the current density $\vec{J}$, $K$ a matrix corresponding to a discretized Biot-Savart law, $A$ the wavelet transform, and $Bx=b$ represents the linear constraints $\diver(\vec{J})=0$. In this last point lies the main difference with the simulations in \cite{FoP:2008}: we shall incorporate into the reconstruction procedure the assumption that the current density is divergence-free; this was not done in \cite{FoP:2008}. Our approach does not use divergence-free wavelets \cite{Lemarie-Rieusset1991,MR1191345,STEVENSON2011}, but relies on an explicit linear constraint for finding a divergence-free reconstruction.

On a synthetic problem, we investigate the effect of the divergence-free nature of the current distribution on the sparse reconstruction. That is, we compare reconstructions from the same measurement data with and without the constraint. Secondly, we investigate the effect on the reconstruction of imposing a ``joint sparsity'' \cite{Fornasier.Rauhut2008a} condition on the two vector components of the current density. This means that at each position both vector components are simultaneously zero or simultaneously non-zero. Joint sparsity in MEG was first discussed in \cite{FoP:2008} as a way of improving reconstruction quality. The proposed iterative algorithm can also handle penalties that promote joint sparsity (see discussion at the end of Section~\ref{proofsection}).

Besides MEG, other applications of $\ell_1$-penalized least squares under linear constraints exist. One is found in the portfolio selection problem described in \cite{Brodie.Daubechies.ea2009}. Such problems are often of a smaller size (fewer variables) and can sometimes also be solved via a non-iterative procedure. Another application of the minimization problem (\ref{functional}) is found in the formulation of a modified Total Variation model in the context of image processing tasks  \cite{Litvinov2011}.

In this paper we will make frequent use of the (non-linear) soft-thresholding operator which is defined by:
\begin{equation}
S_\lambda(z)=
\left\{
\begin{array}{ll}
z-\frac{z}{|z|}\lambda &  |z|>\lambda\\
0 & |z|\leq \lambda
\end{array}
\right.\label{Sdef}
\end{equation}
and of the projection on the $\ell_\infty$ ball of radius $\lambda$ defined by:
\begin{equation}
P_\lambda(z)=
\left\{
\begin{array}{ll}
\frac{z}{|z|}\lambda &  |z|>\lambda\\
z & |z|\leq \lambda.
\end{array}
\right.\label{Pdef}
\end{equation}
We have that:
\begin{equation}\label{PSproperty}
S_\lambda(z)+P_\lambda(z)=z
\end{equation}
for all $z$. We set  $B^{\infty}_{\lambda}=\{u\ \mathrm{with}\ \|u\|_\infty\leq\lambda\}$ ($\ell_\infty$-ball of radius $\lambda$).

\section{Description of the iterative algorithm}

\label{problemsection}

The variational equations of the minimization problem (\ref{functional}) can be obtained by the introduction of Lagrange multipliers $v$:
\begin{equation}
\min_{x}\|Kx-y\|^2+2\lambda \|Ax\|_1-2\langle v,Bx-b\rangle\qquad\mathrm{and}\qquad Bx=b.
\end{equation}
Derivation with respect to $x$ yields:
\begin{displaymath}
K^T(Kx-y)+A^T w-B^Tv=0 \qquad\mathrm{and}\qquad Bx=b,
\end{displaymath}
where $w$ is an element of the subdifferential of
$\lambda\|Ax\|_1$, i.e. $w_i=\lambda\,(Ax)_i/|(Ax)_i|$ if
$(Ax)_i\neq 0$ and $|w_i|\leq\lambda$ if $(Ax)_i=0$. This can be written more compactly as $(Ax)_i=S_\lambda(w_i+(Ax)_i)$ or equivalently
$w_i=P_\lambda(w_i+(Ax)_i)$. We find that the variational equations
corresponding to the problem (\ref{functional}) therefore are:
\begin{equation}
K^T(Kx-y)+A^T w-B^Tv=0,\quad w=\mathbb{P}_\lambda(w+Ax) \quad\mathrm{and}\quad Bx=b,\label{vareq}
\end{equation}
where $\mathbb{P}_{\lambda}(u)$ corresponds to the application of $P_{\lambda}$ (defined in ( formula \ref{Pdef})) on each component of $u$.
We assume that a solution to these equations exists and try to derive an iterative algorithm that converges to such a solution.
By writing $\lambda\|Ax\|_1=\max_{\|w\|_\infty\leq\lambda}\langle w,Ax\rangle$, the minimization
problem (\ref{functional}) can expressed as:
\begin{equation}
\min_{x,Bx=b}\max_{w\in B^{\infty}_{\lambda}} F(x,w,v),
\label{minmax}
\end{equation}
where we have set:
\begin{equation}
F(x,w,v)=\|Kx-y\|^2+2\langle w,Ax\rangle-2\langle v,Bx-b\rangle.
\label{Fdef}
\end{equation}

We write the variational equations (\ref{vareq}) as fixed-point
equations:
\begin{equation}
\left\{
\begin{array}{lcl}
w&=&\mathbb{P}_{\lambda}(w+Ax)\\
x&=&x+K^T(y-Kx)+B^Tv-A^Tw\\
v&=&v-\frac{1}{\alpha}(Bx-b)\\
\end{array}
\right.\label{fixeq}
\end{equation}
and study the predictor-corrector scheme:
\begin{equation}
\left\{
\begin{array}{lcl}
\bar v^{n+1}&=&v^n-(Bx^n-b)\\
\bar x^{n+1}&=&x^n+K^T(y-Kx^n)+B^T\bar v^{n+1}-A^Tw^n\\
w^{n+1}&=&\mathbb{P}_{\lambda}(w^n+A\bar x^{n+1})\\
x^{n+1}&=&x^n+K^T(y-Kx^n)+B^T \bar v^{n+1}-A^Tw^{n+1}\\
v^{n+1}&=&v^n-\frac{1}{\alpha}(Bx^{n+1}-b).
\end{array}
\right.   \label{alg}
\end{equation}
There is a predictor-corrector step on the variables $v$ and $x$ but not on $w$.
Clearly, the fixed-point of this iteration is a solution to the variational
equations (\ref{vareq}). Moreover the algorithm is fully explicit: Each step only requires the application of the matrices $K,A,B$ (and their transposes) and a simple projection $\mathbb{P}_{\lambda}$ (see formula (\ref{Pdef})). There is no non-trivial sub-problem to solve in each step (such as e.g. solving a linear system of equations). In other words, there is no inner loop required for any of the lines in (\ref{alg}).
In the next section we show that, under certain conditions on the operators
$A$, $B$ and $K$, and on the parameter $\alpha$, the proposed algorithm (\ref{alg}) converges to a
solution of the variational equations (\ref{vareq}), and to a
minimizer of the functional (\ref{functional}).

For the special case when $B=0$ and $b=0$ (absence of linear constraints), the algorithm (\ref{alg})
reduces to:
\begin{equation}
\left\{
\begin{array}{lcl}
\bar x^{n+1}&=&x^n+K^T(y-Kx^n)-A^Tw^n\\
w^{n+1}&=&\mathbb{P}_{\lambda}(w^n+A\bar x^{n+1})\\
x^{n+1}&=&x^n+K^T(y-Kx^n)-A^Tw^{n+1}.
\end{array}
\right.\label{gista}
\end{equation}
This algorithm was presented in \cite{Loris.Verhoeven2011} to solve the problem:
\begin{equation}
\hat x=\arg\min_x\|Kx-y\|^2+2\lambda \|Ax\|_1.\label{functional2}
\end{equation}
An important application is the total variation penalty in image analysis ($A=\mathrm{grad}$). A similar algorithm (with predictor-corrector step on the $w$ variable) was proposed independently in \cite{Bonettini2011} for Poisson data. When $A=\id$, and using relation (\ref{PSproperty}), algorithm (\ref{gista}) further simplifies to the traditional iterative soft-thresholding algorithm:
\begin{equation}
x^{n+1}=\mathbb{S}_\lambda\left(x^n+K^T(y-Kx^n)\right),
\end{equation}
where $\mathbb{S}_{\lambda}(u)$ corresponds to the application of $S_{\lambda}$ (defined in formula (\ref{Sdef})) on each component of $u$. This algorithm was discussed in \cite{Daubechies2004b} for solving
\begin{equation}
\min_x\|Kx-y\|^2+2\lambda \|x\|_1.
\end{equation}
An accelerated version of this algorithm, the Fast Iterative Soft-Thresholding Algorithm (FISTA), was derived in \cite{BeT:2009}. Many other algorithms exist as well.

On the other hand, when the constraints $Bx=b$ are maintained, and with $A$ equal to the identity, the algorithm (\ref{alg}) reduces to a constrained version of the iterative soft-thresholding algorithm:
\begin{equation}
\left\{\begin{array}{ll}
\bar v^{n+1}=v^n-(Bx^n-b)\\
x^{n+1}=\mathbb{S}_{\lambda}(x^n+K^T(y-Kx^n)+B^T\bar v^{n+1})\\
v^{n+1}=v^n-\frac{1}{\alpha}(Bx^{n+1}-b),
\end{array}\right.\label{constrl1alg}
\end{equation}
for the problem
\begin{equation}
\hat x=\arg\min_{Bx=b}\|Kx-y\|^2+2\lambda \|x\|_1.\label{constrproblem}
\end{equation}
We will use algorithm (\ref{constrl1alg}) for an application in magneto-encephalography in section \ref{MEGsection}.  Although it was not included in \cite{Zhang.Burger.ea2011}, algorithm (\ref{constrl1alg}) for problem (\ref{constrproblem}), could also have been derived from the Bregman framework of \cite{Zhang.Burger.ea2011}. At the end of section \ref{proofsection} we will comment on the difference between conditions of convergence that the matrices $K$ and $B$ have to satisfy to guarantee convergence of (\ref{constrl1alg}) in our approach and in \cite{Zhang.Burger.ea2011}.

Finally, by setting $K=0$ and $A=\id$ in problem (\ref{functional}) one recovers the so-called $\ell_1$ basis pursuit problem \cite{Chen.Donoho.ea1998}:
\begin{equation}
\arg\min_{Bx=b}\|x\|_1\label{BPproblem}
\end{equation}
for which the algorithms (\ref{alg}) and (\ref{constrl1alg}) reduce to:
\begin{equation}
\left\{\begin{array}{ll}
\bar v^{n+1}=v^n-(Bx^n-b)\\
x^{n+1}=\mathbb{S}_{\lambda}(x^n+B^T\bar v^{n+1})\\
v^{n+1}=v^n-\frac{1}{\alpha}(Bx^{n+1}-b),
\end{array}\right.\label{BPalg}
\end{equation}
This algorithm was discussed in \cite{Zhang.Burger.ea2011} (using different notation and auxiliary variables) in a Bregman framework.

Taking these special cases into account we can say that the proposed algorithm (\ref{alg}) combines the generalized iterative soft-thresholding algorithm (\ref{gista}) of \cite{Loris.Verhoeven2011} with the basis pursuit algorithm (\ref{BPalg}) into a single unified algorithm.

\section{Proof of convergence}
\label{proofsection}

In this section, we prove the convergence of algorithm (\ref{alg}) and show that this yields a minimum of functional (\ref{functional}).

\begin{lemma} If $u^+=\mathbb{P}_{\lambda}(u^-+\Delta)$, with $\mathbb{P}_{\lambda}$ the projection on the convex set $B_\lambda^\infty$, then
\begin{equation}
\|u-u^+\|^2\leq \|u-u^-\|^2-\|u^--u^+\|^2-2\langle u-u^+,\Delta\rangle
\label{lemmaueq}
\end{equation}
for all $u\in B_\lambda^\infty$. \label{lemmau}
\end{lemma}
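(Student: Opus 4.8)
The plan is to invoke the standard first-order (variational) characterization of the Euclidean projection onto a closed convex set and then reduce the whole inequality to a single inner product by a routine norm expansion. First I would recall that, by definition, $u^+=\mathbb{P}_{\lambda}(u^-+\Delta)$ is the minimizer over $B_\lambda^\infty$ of $w\mapsto\|w-(u^-+\Delta)\|^2$. The optimality condition for this convex minimization --- the obtuse-angle property of projections --- reads
\[
\langle (u^-+\Delta)-u^+,\,u-u^+\rangle\leq 0\qquad\text{for all }u\in B_\lambda^\infty.
\]
Since $u^+\in B_\lambda^\infty$ by construction, this inequality is available for exactly the set of competitors $u$ quantified in the statement, so establishing it is the only genuine input needed.

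Next I would show that the left-hand side minus the right-hand side of (\ref{lemmaueq}) collapses to (twice) the inner product above. Concretely, I would verify that the quantity
\[
\|u-u^+\|^2-\|u-u^-\|^2+\|u^--u^+\|^2+2\langle u-u^+,\Delta\rangle
\]
equals $2\langle (u^-+\Delta)-u^+,\,u-u^+\rangle$. Using the identity $\|a\|^2-\|b\|^2=\langle a-b,a+b\rangle$ with $a=u-u^+$ and $b=u-u^-$, the first two terms become $\langle u^--u^+,\,2u-u^+-u^-\rangle$; adding $\|u^--u^+\|^2$ telescopes the second argument to $2(u-u^+)$, leaving $2\langle u^--u^+,\,u-u^+\rangle$. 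Folding in the $2\langle u-u^+,\Delta\rangle$ term then yields $2\langle (u^-+\Delta)-u^+,\,u-u^+\rangle$, which is nonpositive by the projection inequality. Rearranging gives precisely (\ref{lemmaueq}).

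There is no real obstacle here: the entire content is the variational inequality for projections, and the rest is bookkeeping with the inner product. The only points demanding care are (i) applying the optimality condition with the correct sign and with $u$ ranging over the whole convex set (not merely a neighborhood of $u^+$), and (ii) tracking signs in the norm expansion so the cross terms combine cleanly. I would also flag that the argument uses nothing about $B_\lambda^\infty$ beyond its being closed and convex, so the lemma holds verbatim for the projection onto any closed convex set --- worth noting since the later convergence analysis is likely to reuse the estimate, with $\Delta=A\bar x^{n+1}$ and $u^-=w^n$, $u^+=w^{n+1}$, in exactly this generality.
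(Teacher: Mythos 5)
Your proof is correct and follows essentially the same route as the paper: both rest on the obtuse-angle (variational) inequality $\langle u-u^+,\,(u^-+\Delta)-u^+\rangle\leq 0$ for the projection onto a closed convex set, followed by the same polarization/norm-expansion bookkeeping to rearrange it into the stated inequality. Your closing remark that the argument works for any nonempty closed convex set is also made explicitly in the paper immediately after the lemma, so there is nothing to add.
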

\begin{proof}
As $\mathbb{P}_{\lambda}$ is the projection on a non-empty closed convex set one has:
\begin{displaymath}
\langle u-\mathbb{P}_\lambda(u'),u'-\mathbb{P}_\lambda(u')\rangle\leq 0
\end{displaymath}
for all $u\in B_\lambda^\infty$ and all $u'$. Choosing $u'=u^-+\Delta$ and $\mathbb{P}_{\lambda}(u')=u^+$ yields
\begin{displaymath}
\langle u-u^+,u^-+\Delta-u^+\rangle\leq 0.
\end{displaymath}
Replacing $\langle u-u^+,u^--u^+\rangle$ by $\left(\|u-u^+\|^2+\|u^--u^+\|^2-\|u-u^-\|^2\right)/2$
yields
\begin{displaymath}
\|u-u^+\|^2+\|u^--u^+\|^2-\|u-u^-\|^2+2\langle u-u^+,\Delta\rangle\leq 0
\end{displaymath}
which is the desired result.
\end{proof}

The operator $\mathbb{P}_{\lambda}$ and the convex set $B_\lambda^\infty$ in Lemma \ref{lemmau} may be replaced with a projection on any non-empty closed convex set. In particular, when $u^+=u^-+\Delta$ (i.e. $B_\lambda^\infty$ replaced by the whole space and $\mathbb{P}_{\lambda}$ replaced by the identity), one has that:
\begin{equation}
\|u-u^+\|^2=\|u-u^-\|^2-\|u^--u^+\|^2-2\langle u-u^+,\Delta\rangle\label{temp2}
\end{equation}
for all $u$.

\begin{lemma} If $(x^{n+1},w^{n+1},v^{n+1})$ and $(x^n,w^n,v^n)$ are related by iteration (\ref{alg}) then
\begin{equation}
\begin{array}{l}
\|x-x^{n+1}\|^2+\|w-w^{n+1}\|^2+\alpha\|v-v^{n+1}\|^2\leq \\
\qquad\qquad \|x-x^n\|^2+\|w-w^n\|^2+
\alpha\|v-v^n\|^2-\|x^n-x^{n+1}\|^2\\
\qquad\qquad-\|w^n-w^{n+1}\|^2
-\alpha\|v^n-v^{n+1}\|^2-\|K(x-x^n)\|^2\\
\qquad\qquad +\|K(x^n-x^{n+1})\|^2-\|B(x-x^n)\|^2
+\|B(x-x^{n+1})\|^2\\
\qquad\qquad +\|B(x^n-x^{n+1})\|^2
-\|A^T(w-w^n)\|^2\\
\qquad\qquad +\|A^T(w-w^{n+1})\|^2
+\|A^T(w^{n+1}-w^n)\|^2\\
\qquad\qquad  +F(x,w^{n+1},v^{n+1})-F(x^{n+1},w,v)\\
\qquad\qquad +2\frac{\alpha-1}{\alpha}\langle B(x-x^{n+1}),Bx^{n+1}-b\rangle
\end{array}
\label{ineqlemma3}
\end{equation}
for all $x,v$ and all $w\in B^{\infty}_{\lambda}$.
\label{lemma3}
\end{lemma}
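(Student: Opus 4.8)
The plan is to apply the two estimates established just before the lemma to each of the three updates in (\ref{alg}), to add them with weights $1$, $1$ and $\alpha$ respectively, and then to reorganize the resulting inner-product (``cross'') terms into the norm differences, the functional difference $F(x,w^{n+1},v^{n+1})-F(x^{n+1},w,v)$, and the final term displayed in (\ref{ineqlemma3}). The functional difference serves only as bookkeeping: I will recognize which cross terms assemble into it.

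First I would treat the $w$-update $w^{n+1}=\mathbb{P}_{\lambda}(w^n+A\bar x^{n+1})$ with Lemma \ref{lemmau}, taking $u^-=w^n$, $u^+=w^{n+1}$, $\Delta=A\bar x^{n+1}$ and $u=w\in B_\lambda^\infty$; this is the only inequality and the only place where $w\in B_\lambda^\infty$ is used. The $x$- and $v$-updates carry no projection, so I would apply the equality (\ref{temp2}) to each: for $x$ with $\Delta_x=K^T(y-Kx^n)+B^T\bar v^{n+1}-A^Tw^{n+1}$ (so that $x^{n+1}=x^n+\Delta_x$), and for $v$ with $\Delta_v=-\frac1\alpha(Bx^{n+1}-b)$. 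Multiplying the $v$-identity by $\alpha$ and summing produces, on the left, exactly $\|x-x^{n+1}\|^2+\|w-w^{n+1}\|^2+\alpha\|v-v^{n+1}\|^2$, and on the right the ``previous'' weighted norms, the negative step norms $-\|x^n-x^{n+1}\|^2-\|w^n-w^{n+1}\|^2-\alpha\|v^n-v^{n+1}\|^2$, and three cross terms $T_1=-2\langle x-x^{n+1},\Delta_x\rangle$, $T_2=-2\langle w-w^{n+1},A\bar x^{n+1}\rangle$ and $T_3=2\langle v-v^{n+1},Bx^{n+1}-b\rangle$.

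The bulk of the work is to reshape $T_1+T_2+T_3$. I would split $T_1$ through the adjoints of $K$, $B$, $A$ into a $K$-, $B$- and $A$-piece. For the $K$-piece $-2\langle K(x-x^{n+1}),y-Kx^n\rangle$, the polarization identity $2\langle a-b,c\rangle=\|a\|^2-\|b\|^2-\|a-c\|^2+\|c-b\|^2$ with $a=Kx-y$, $b=Kx^{n+1}-y$, $c=Kx^n-y$ yields $\|Kx-y\|^2-\|Kx^{n+1}-y\|^2$ (absorbed into the functional difference) together with $-\|K(x-x^n)\|^2+\|K(x^n-x^{n+1})\|^2$. For the $A$-terms I would use the identity $\bar x^{n+1}-x^{n+1}=A^T(w^{n+1}-w^n)$, read off from the second and fourth lines of (\ref{alg}); substituting $A\bar x^{n+1}=Ax^{n+1}+AA^T(w^{n+1}-w^n)$ splits $T_2$ into $-2\langle A^T(w-w^{n+1}),A^T(w^{n+1}-w^n)\rangle$, which by polarization gives $\|A^T(w-w^{n+1})\|^2+\|A^T(w^{n+1}-w^n)\|^2-\|A^T(w-w^n)\|^2$, and a part $-2\langle w-w^{n+1},Ax^{n+1}\rangle$ that combines with the $A$-piece $2\langle A(x-x^{n+1}),w^{n+1}\rangle$ of $T_1$; the $\langle Ax^{n+1},w^{n+1}\rangle$ contributions cancel and what survives is exactly $2\langle w^{n+1},Ax\rangle-2\langle w,Ax^{n+1}\rangle$, the $A$-part of the functional difference.

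The delicate step, which I expect to be the main obstacle, is the $B$-part, because the predictor $\bar v^{n+1}=v^n-(Bx^n-b)$ and the corrector $v^{n+1}=v^n-\frac1\alpha(Bx^{n+1}-b)$ use different step sizes and different iterates of $x$. Writing $r=Bx^{n+1}-b$ and $s=Bx^n-b$, so that $\bar v^{n+1}=v^{n+1}+\frac1\alpha r-s$, the $B$-piece of $T_1$, namely $-2\langle B(x-x^{n+1}),\bar v^{n+1}\rangle$, becomes $-2\langle B(x-x^{n+1}),v^{n+1}\rangle-\frac2\alpha\langle B(x-x^{n+1}),r\rangle+2\langle B(x-x^{n+1}),s\rangle$. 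The key is the splitting $-\frac2\alpha=-2+2\frac{\alpha-1}{\alpha}$: the $-2\langle B(x-x^{n+1}),r\rangle$ joins $+2\langle B(x-x^{n+1}),s\rangle$ to form $-2\langle B(x-x^{n+1}),B(x^{n+1}-x^n)\rangle$, which polarizes into $\|B(x-x^{n+1})\|^2+\|B(x^n-x^{n+1})\|^2-\|B(x-x^n)\|^2$, while the leftover $2\frac{\alpha-1}{\alpha}\langle B(x-x^{n+1}),Bx^{n+1}-b\rangle$ is exactly the final term of (\ref{ineqlemma3}). Finally I would check that $-2\langle B(x-x^{n+1}),v^{n+1}\rangle$ together with $T_3$ collapses, after the rewriting $-2\langle B(x-x^{n+1}),v^{n+1}\rangle=-2\langle v^{n+1},Bx-b\rangle+2\langle v^{n+1},r\rangle$, to $-2\langle v^{n+1},Bx-b\rangle+2\langle v,Bx^{n+1}-b\rangle$, the $B,v$-part of the functional difference, completing the bookkeeping.
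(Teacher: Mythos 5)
Your proposal is correct and follows essentially the same route as the paper's own proof: Lemma \ref{lemmau} applied to the $w$-update, the equality (\ref{temp2}) applied to the $x$- and $v$-updates (the latter weighted by $\alpha$), followed by the substitutions $\bar x^{n+1}=x^{n+1}+A^T(w^{n+1}-w^n)$ and $\bar v^{n+1}=v^{n+1}+B(x^{n+1}-x^n)+\frac{1-\alpha}{\alpha}(Bx^{n+1}-b)$ (your $\bar v^{n+1}=v^{n+1}+\frac{1}{\alpha}r-s$ is the same identity), polarization of the cross terms, and identification of the surviving bilinear terms with $F(x,w^{n+1},v^{n+1})-F(x^{n+1},w,v)$. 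All algebraic steps check out, including the cancellation of the $\langle w^{n+1},Ax^{n+1}\rangle$ and $\langle v^{n+1},Bx^{n+1}\rangle$ terms and the splitting $-\frac{2}{\alpha}=-2+2\frac{\alpha-1}{\alpha}$ that isolates the final term.
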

\begin{proof}
From Lemma \ref{lemmau} and equation (\ref{temp2}), we find:
\begin{displaymath}
\begin{array}{l}
\|x-x^{n+1}\|^2+\|w-w^{n+1}\|^2+\alpha\|v-v^{n+1}\|^2\leq \|x-x^n\|^2+\|w-w^n\|^2\\
\qquad +\alpha\|v-v^n\|^2-\|x^n-x^{n+1}\|^2-\|w^n-w^{n+1}\|^2
-\alpha\|v^n-v^{n+1}\|^2\\
\qquad -2\langle x-x^{n+1},K^T(y-Kx^n)\rangle
-2\langle x-x^{n+1},B^T\bar v^{n+1}-A^Tw^{n+1}\rangle\\
\qquad -2\langle w-w^{n+1},A\bar x^{n+1}\rangle
+2\langle v-v^{n+1},Bx^{n+1}-b\rangle.
\end{array}
\end{displaymath}
As (\ref{alg}) implies that $\bar v^{n+1}=v^{n+1}+B(x^{n+1}-x^n)+\frac{1-\alpha}{\alpha}(Bx^{n+1}-b)$
and that $\bar x^{n+1}=x^{n+1}+A^T(w^{n+1}-w^n)$,
this can be written as:
\begin{displaymath}
\begin{array}{l}
\|x-x^{n+1}\|^2+\|w-w^{n+1}\|^2+\alpha\|v-v^{n+1}\|^2\leq \|x-x^n\|^2+\|w-w^n\|^2\\
\quad+
\alpha\|v-v^n\|^2-\|x^n-x^{n+1}\|^2-\|w^n-w^{n+1}\|^2
-\alpha\|v^n-v^{n+1}\|^2\\
\quad -2\langle K(x-x^{n+1}),y-Kx^n\rangle+2\langle A(x-x^{n+1}),w^{n+1}\rangle\\
\quad
-2\langle B(x-x^{n+1}),v^{n+1}\rangle
-2\langle B(x-x^{n+1}),B(x^{n+1}-x^n)\rangle\\
\quad+2\frac{\alpha-1}{\alpha}\langle B(x-x^{n+1}),Bx^{n+1}-b\rangle
-2\langle w-w^{n+1},Ax^{n+1}\rangle\\
\quad-2\langle A^T(w-w^{n+1}),A^T(w^{n+1}-w^n)\rangle
+2\langle v-v^{n+1},Bx^{n+1}-b\rangle.
\end{array}
\end{displaymath}
The terms in $\langle v^{n+1},Bx^{n+1}\rangle$ and $\langle w^{n+1},Ax^{n+1}\rangle$  drop and the remaining terms can be re-arranged to yield:
\begin{displaymath}
\begin{array}{l}
\|x-x^{n+1}\|^2+\|w-w^{n+1}\|^2+\alpha\|v-v^{n+1}\|^2\leq \|x-x^n\|^2+\|w-w^n\|^2\\
\quad +\alpha\|v-v^n\|^2-\|x^n-x^{n+1}\|^2-\|w^n-w^{n+1}\|^2
-\alpha\|v^n-v^{n+1}\|^2\\
\quad -2\langle K(x-x^{n+1}),y-Kx^n\rangle-2\langle A^T(w-w^{n+1}),A^T(w^{n+1}-w^n)\rangle\\
\quad -2\langle B(x-x^{n+1}),B(x^{n+1}-x^n)\rangle+2\frac{\alpha-1}{\alpha}\langle B(x-x^{n+1}),Bx^{n+1}-b\rangle\\
\quad +2\langle w^{n+1},Ax\rangle-2\langle w,Ax^{n+1}\rangle
-2\langle v^{n+1},Bx-b\rangle+2\langle v, Bx^{n+1}-b\rangle.
\end{array}
\end{displaymath}
By rewriting the following inner products:
\begin{displaymath}
\begin{array}{l}
-2\langle K(x-x^{n+1}),y-Kx^n\rangle =\|Kx-y\|^2-\|Kx^{n+1}-y\|^2\\
 \qquad\qquad\qquad\qquad\qquad\qquad -\|K(x-x^n)\|^2+\|K(x^n-x^{n+1})\|^2\\[2mm]
-2\langle A^T(w-w^{n+1}),A^T(w^{n+1}-w^n) \rangle=\|A^T(w-w^{n+1})\|^2\\
 \qquad\qquad\qquad\qquad\qquad\qquad +\|A^T(w^{n+1}-w^n)\|^2  -\|A^T(w-w^n)\|^2\\[2mm]
-2\langle B(x-x^{n+1}),B(x^{n+1}-x^n) \rangle=\|B(x-x^{n+1})\|^2\\
 \qquad\qquad\qquad\qquad\qquad\qquad +\|B(x^{n+1}-x^n)\|^2 -\|B(x-x^n)\|^2,
\end{array}
\end{displaymath}
and by using the expression (\ref{Fdef}) of $F(x,v,w)$ in:
\begin{displaymath}
\begin{split}
2\langle w^{n+1},Ax\rangle&-2\langle w,Ax^{n+1}\rangle
-2\langle v^{n+1},Bx-b\rangle+2\langle v, Bx^{n+1}-b\rangle=\\
&F(x,w^{n+1},v^{n+1})-F(x^{n+1},w,v)-
\|Kx-y\|^2+\|Kx^{n+1}-y\|^2,
\end{split}
\end{displaymath}
the previous inequality can be written as (\ref{ineqlemma3}), which proves the lemma.
\end{proof}

\begin{lemma}
If $(\hat x,\hat w,\hat v)$ satisfies the variational equations
(\ref{vareq}), then
\begin{equation}
F(\hat x,w,v)-F(x,\hat w,\hat v)\leq -\|K(x-\hat x)\|^2
\label{tmp0}
\end{equation}
for all $x,v$ and all $w\in B^{\infty}_{\lambda}$.
\label{lemmagap}
\end{lemma}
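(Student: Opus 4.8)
The plan is to compute the gap $F(\hat x,w,v)-F(x,\hat w,\hat v)$ directly from the definition (\ref{Fdef}) and reduce it, using the three variational equations (\ref{vareq}) in turn, to a single nonpositive inner product together with the advertised $-\|K(x-\hat x)\|^2$ term. First I would write out both values of $F$. Since $B\hat x=b$, the constraint term in $F(\hat x,w,v)=\|K\hat x-y\|^2+2\langle w,A\hat x\rangle-2\langle v,B\hat x-b\rangle$ vanishes, so the multiplier $v$ disappears immediately; this is already why the claimed bound is uniform in $v$. I would then expand the quadratic data part around $\hat x$ as
\[
\|K\hat x-y\|^2-\|Kx-y\|^2=-2\langle K^T(K\hat x-y),x-\hat x\rangle-\|K(x-\hat x)\|^2,
\]
and substitute the stationarity equation $K^T(K\hat x-y)=B^T\hat v-A^T\hat w$ (the first equation of (\ref{vareq})) to trade the data residual for terms in $\hat v$ and $\hat w$.

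Next I would collect terms. The $B$-contributions combine as $-2\langle \hat v,B(x-\hat x)\rangle+2\langle\hat v,Bx-b\rangle$, which is exactly zero once $B\hat x=b$ is used a second time; thus all dependence on the multipliers $\hat v$ and $v$ is eliminated. The surviving $A$-terms simplify to $2\langle w-\hat w,A\hat x\rangle$, leaving the clean identity
\[
F(\hat x,w,v)-F(x,\hat w,\hat v)=2\langle w-\hat w,A\hat x\rangle-\|K(x-\hat x)\|^2 .
\]

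The remaining step — essentially the only nontrivial one — is to show $\langle w-\hat w,A\hat x\rangle\leq 0$ for every $w\in B^{\infty}_{\lambda}$. Here I would invoke the second variational equation $\hat w=\mathbb{P}_{\lambda}(\hat w+A\hat x)$ together with the defining inequality of the projection onto a nonempty closed convex set (the same inequality used at the start of the proof of Lemma \ref{lemmau}). Taking $u'=\hat w+A\hat x$ so that $\mathbb{P}_{\lambda}(u')=\hat w$, and any $u=w\in B^{\infty}_{\lambda}$, gives $\langle w-\hat w,(\hat w+A\hat x)-\hat w\rangle\leq 0$, that is, $\langle w-\hat w,A\hat x\rangle\leq 0$. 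Substituting this into the identity above yields (\ref{tmp0}).

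I do not expect a genuine obstacle beyond careful bookkeeping of the inner products and two applications of $B\hat x=b$. The one conceptual point worth emphasizing is that each of the three variational equations plays a distinct and complementary role: the constraint $Bx=b$ removes the dependence on $v$ and $\hat v$, the stationarity equation removes the data residual at the cost of the term $-\|K(x-\hat x)\|^2$, and the projection equation for $\hat w$ supplies the correct sign for the final inner product.
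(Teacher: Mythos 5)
Your proof is correct, but it follows a genuinely different route from the paper's. The paper obtains Lemma~\ref{lemmagap} as a quick corollary of the heavier Lemma~\ref{lemma3}: since a solution of (\ref{vareq}) is a fixed point of the iteration (\ref{alg}), one may substitute $(\hat x,\hat w,\hat v)$ for \emph{both} $(x^n,w^n,v^n)$ and $(x^{n+1},w^{n+1},v^{n+1})$ in inequality (\ref{ineqlemma3}); all difference terms then vanish, the pairs $\pm\|B(x-\hat x)\|^2$ and $\pm\|A^T(w-\hat w)\|^2$ cancel, the final term drops because $B\hat x=b$, and what remains is exactly
\begin{equation*}
0\leq -\|K(x-\hat x)\|^2+F(x,\hat w,\hat v)-F(\hat x,w,v),
\end{equation*}
which is (\ref{tmp0}). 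You instead work directly from the definition (\ref{Fdef}): you expand the quadratic around $\hat x$, substitute the stationarity equation $K^T(K\hat x-y)=B^T\hat v-A^T\hat w$, use $B\hat x=b$ twice to eliminate both multipliers, and arrive at the exact identity
\begin{equation*}
F(\hat x,w,v)-F(x,\hat w,\hat v)=2\langle w-\hat w,A\hat x\rangle-\|K(x-\hat x)\|^2,
\end{equation*}
after which the obtuse-angle characterization of the projection (the same inequality that opens the proof of Lemma~\ref{lemmau}), applied with $u'=\hat w+A\hat x$ and $\mathbb{P}_\lambda(u')=\hat w$, gives $\langle w-\hat w,A\hat x\rangle\leq 0$. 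Both arguments are sound, and I checked your bookkeeping of the inner products: the cancellation of the $B$-terms and the collapse of the $A$-terms to $2\langle w-\hat w,A\hat x\rangle$ are exactly right. The trade-off: the paper's proof is shorter on the page because the algebra was already paid for in Lemma~\ref{lemma3}, but it hides where the inequality actually comes from; your proof is self-contained (it needs only the elementary projection inequality, not Lemma~\ref{lemma3}), and it is sharper in that it exhibits the duality gap as an identity, isolating the single inequality used and making transparent the distinct role of each of the three variational equations -- which is precisely the kind of insight the paper's corollary-style proof obscures.
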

\begin{proof}
To prove inequality (\ref{tmp0}), we use
Lemma \ref{lemma3}, where we replace both $(x^n,w^n,v^n)$ and $(x^{n+1},w^{n+1},v^{n+1})$ by
$(\hat x,\hat w,\hat v)$; this is allowed because $(\hat x,\hat w,\hat v)$ satisfies (\ref{fixeq}) which are the fixed-point equations of algorithm (\ref{alg}).
Then relation (\ref{ineqlemma3}) becomes:
\begin{equation}
\begin{split}
\|x-\hat x\|^2+\|w-\hat w\|^2+\alpha\|v-\hat v\|^2\leq & \|x-\hat x\|^2+\|w-\hat w\|^2+\alpha\|v-\hat v\|^2\\
&-\|K(x-\hat x)\|^2\\
&-\|B(x-\hat x)\|^2+\|B(x-\hat x)\|^2\\
&-\|A^T(w-\hat w)\|^2+\|A^T(w-\hat w)\|^2\\
&+F(x,\hat w,\hat v)-F(\hat x,w,v),
\end{split}
\end{equation}
for all $x,v$ and all $w\in B^{\infty}_{\lambda}$. This implies inequality (\ref{tmp0}).
\end{proof}

We now show that a solution of equations (\ref{vareq}) solves
the minimization problem (\ref{functional}).
\begin{theorem}
\label{theorem1}
If $(\hat x,\hat w,\hat v)$ satisfies the variational equations (\ref{vareq}) then $\hat x$ is a solution of the minimization problem (\ref{functional}).
\end{theorem}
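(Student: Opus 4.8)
The plan is to deduce the minimization property entirely from Lemma~\ref{lemmagap}, using the saddle-point structure of $F$ together with the dual representation $\lambda\|Ax\|_1=\max_{w\in B^\infty_\lambda}\langle w,Ax\rangle$. The starting point is inequality (\ref{tmp0}): since $-\|K(x-\hat x)\|^2\leq 0$, it yields the clean relation $F(\hat x,w,v)\leq F(x,\hat w,\hat v)$, valid for \emph{all} $x,v$ and all $w\in B^\infty_\lambda$. The whole argument consists of choosing these free variables well on each side and exploiting the two pieces of information contained in the variational equations (\ref{vareq}), namely that $\hat x$ is feasible ($B\hat x=b$) and that $\hat w$ lies in $B^\infty_\lambda$ (being in the range of $\mathbb{P}_\lambda$).

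First I would record feasibility: from (\ref{vareq}) we have $B\hat x=b$, so $\hat x$ is admissible for (\ref{functional}). Next I would evaluate the left-hand side $F(\hat x,w,v)$. Using the definition (\ref{Fdef}) and $B\hat x-b=0$, the Lagrange term drops for every $v$, leaving $F(\hat x,w,v)=\|K\hat x-y\|^2+2\langle w,A\hat x\rangle$. Choosing $w=w^\ast\in B^\infty_\lambda$ with $w^\ast_i=\lambda\,\sign((A\hat x)_i)$ attains the dual maximum, so that $F(\hat x,w^\ast,v)=\|K\hat x-y\|^2+2\lambda\|A\hat x\|_1$, which is precisely the objective of (\ref{functional}) evaluated at $\hat x$.

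Then I would bound the right-hand side. Let $x$ be any admissible point, i.e.\ $Bx=b$; again the term $-2\langle\hat v,Bx-b\rangle$ vanishes, giving $F(x,\hat w,\hat v)=\|Kx-y\|^2+2\langle\hat w,Ax\rangle$. Since $\hat w\in B^\infty_\lambda$ we have $\langle\hat w,Ax\rangle\leq\lambda\|Ax\|_1$, hence $F(x,\hat w,\hat v)\leq\|Kx-y\|^2+2\lambda\|Ax\|_1$. Combining the two evaluations with $F(\hat x,w^\ast,v)\leq F(x,\hat w,\hat v)$ gives
\begin{equation}
\|K\hat x-y\|^2+2\lambda\|A\hat x\|_1\leq\|Kx-y\|^2+2\lambda\|Ax\|_1
\label{planfinal}
\end{equation}
for every admissible $x$, which is exactly the assertion that $\hat x$ solves (\ref{functional}).

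There is no real obstacle here beyond careful bookkeeping: the only point that requires attention is that the Lagrange multiplier terms must be made to vanish on \emph{both} sides, which works because $B\hat x=b$ eliminates the term on the left regardless of $v$, while restricting to feasible $x$ eliminates it on the right. The asymmetry in how the penalty is handled (an exact maximizer $w^\ast$ on the left versus the inequality $\langle\hat w,Ax\rangle\leq\lambda\|Ax\|_1$ on the right) is what converts the saddle inequality of Lemma~\ref{lemmagap} into the genuine optimality inequality (\ref{planfinal}).
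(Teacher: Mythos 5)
Your proposal is correct and follows essentially the same route as the paper: both arguments start from Lemma~\ref{lemmagap}, use $B\hat x=b$ to kill the multiplier term on the left, saturate the dual bound $2\langle w,A\hat x\rangle$ at $2\lambda\|A\hat x\|_1$ (your explicit maximizer $w^\ast$ is just the paper's ``take the maximum over $w\in B^\infty_\lambda$''), bound $\langle\hat w,Ax\rangle\leq\lambda\|Ax\|_1$ on the right, and restrict to feasible $x$ to remove the remaining constraint term. The only cosmetic difference is that you make explicit the facts the paper uses silently ($\hat w\in B^\infty_\lambda$ and the vanishing of $-2\langle v,B\hat x-b\rangle$ for every $v$), which is fine.
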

\begin{proof}
If $(\hat x,\hat w,\hat v)$ is a solution of (\ref{vareq}) it follows from
Lemma \ref{lemmagap} that $F(\hat x,w,v)\leq F(x,\hat w,\hat v)$ for all
$x,v$ and all $w\in B^{\infty}_{\lambda}$, which means:
\begin{displaymath}
\|K\hat x-y\|^2+2\langle w,A\hat x\rangle\leq
\|Kx-y\|^2+2\langle \hat w,Ax\rangle -2\langle \hat v,Bx-b\rangle.
\end{displaymath}
Taking the maximum over $w\in B^{\infty}_{\lambda}$ in the left hand side gives
\begin{displaymath}
\|K\hat x-y\|^2+2\lambda\|A\hat x\|_1
\leq \|Kx-y\|^2+2\langle \hat w,Ax\rangle-2\langle \hat v,Bx-b\rangle
\end{displaymath}
and since $\langle\hat w,Ax\rangle\leq
\max_{\|\tilde w\|_\infty\leq\lambda}\langle\tilde w,Ax\rangle
=\|Ax\|_1$ one finds:
\begin{displaymath}
\|K\hat x-y\|^2+2\lambda\|A\hat x\|_1
\leq \|Kx-y\|^2+2\lambda\|Ax\|_1-2\langle \hat v,Bx-b\rangle.
\end{displaymath}
for all $x,v$. As we minimize under the condition that $Bx=b$, we have that
$\langle \hat v,Bx-b\rangle=0$ and find
\begin{displaymath}
\|K\hat x-y\|^2+2\lambda\|A\hat x\|_1\leq \|Kx-y\|^2+2\lambda\|Ax\|_1,
\end{displaymath}
for all $x$ for which $Bx=b$. As $B\hat x=b$ this proves the theorem.
\end{proof}

\begin{theorem}
If the set $\{x,\ \mathrm{with}\ Bx=b\}$ is non-empty, $\|AA^T\|<1$,
$\|\frac{1}{2}K^TK+B^TB\|<1$ and $\alpha>\frac{1}{2}$,
then the sequence $(x^n,w^n,v^n)_{n\in\mathbb{N}}$ defined by the iteration
\begin{equation}
\left\{
\begin{array}{lcl}
\bar v^{n+1}&=&v^n-(Bx^n-b)\\
\bar x^{n+1}&=&x^n+K^T(y-Kx^n)+B^T\bar v^{n+1}-A^Tw^n\\
w^{n+1}&=&\mathbb{P}_{\lambda}(w^n+A\bar x^{n+1})\\
x^{n+1}&=&x^n+K^T(y-Kx^n)+B^T \bar v^{n+1}-A^Tw^{n+1}\\
v^{n+1}&=&v^n-\frac{1}{\alpha}(Bx^{n+1}-b)
\end{array}\right.
\label{theoalg}
\end{equation}
converges to a solution $(x^\dagger,w^\dagger,v^\dagger)$ of the
variational equations (\ref{vareq}), and a solution of the minimization problem (\ref{functional}).
\label{theorem2}
\end{theorem}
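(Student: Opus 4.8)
The plan is to run a Lyapunov (Fejér-monotonicity) argument built on the one-step estimate of Lemma~\ref{lemma3}, specialized to the target. First I would let $(\hat x,\hat w,\hat v)$ be any solution of the variational equations (\ref{vareq}) and substitute $(x,w,v)=(\hat x,\hat w,\hat v)$ into inequality (\ref{ineqlemma3}). Since $B\hat x=b$, the final inner-product term collapses to $-2\frac{\alpha-1}{\alpha}\|Bx^{n+1}-b\|^2$, and Lemma~\ref{lemmagap} applied at $(x,w,v)=(x^{n+1},w^{n+1},v^{n+1})$ bounds the awkward gap term $F(\hat x,w^{n+1},v^{n+1})-F(x^{n+1},\hat w,\hat v)$ from above by $-\|K(x^{n+1}-\hat x)\|^2$. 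These two substitutions remove every term that is not a squared norm of a difference.

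Next I would move the two ``mismatched'' terms $\|A^T(\hat w-w^{n+1})\|^2$ and $\|B(\hat x-x^{n+1})\|^2$ to the left and define the energy
\begin{equation}
\Phi_n=\|\hat x-x^n\|^2+\|\hat w-w^n\|^2+\alpha\|\hat v-v^n\|^2-\|A^T(\hat w-w^n)\|^2-\|B(\hat x-x^n)\|^2.
\end{equation}
Because $\|AA^T\|<1$ and $\|B\|^2\le\|\tfrac12K^TK+B^TB\|<1$ (the latter since $K^TK\succeq0$), $\Phi_n$ is nonnegative and in fact dominates $\|\hat x-x^n\|^2$, $\|\hat w-w^n\|^2$ and $\|\hat v-v^n\|^2$ up to positive constants. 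The goal is then to show $\Phi_{n+1}\le\Phi_n$ minus a sum of nonnegative quantities.

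The heart of the argument is checking that the residual difference terms are nonpositive, and this is exactly where the three hypotheses enter. The $w$-group $-\|w^n-w^{n+1}\|^2+\|A^T(w^{n+1}-w^n)\|^2=\langle w^n-w^{n+1},(AA^T-I)(w^n-w^{n+1})\rangle\le0$ uses $\|AA^T\|<1$. The $v$/constraint group collapses, via $\alpha\|v^{n+1}-v^n\|^2=\tfrac1\alpha\|Bx^{n+1}-b\|^2$, to $-\tfrac{2\alpha-1}{\alpha}\|Bx^{n+1}-b\|^2$, which is nonpositive precisely because $\alpha>\tfrac12$. The main obstacle is the $x$-group, where $\|K(x^n-x^{n+1})\|^2$ appears with coefficient one while the hypothesis only controls $\tfrac12K^TK+B^TB$. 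I expect to resolve this by absorbing the two leftover negative $K$-terms: writing $p=K(x^n-\hat x)$, $q=K(x^{n+1}-\hat x)$ one has the identity $\|K(x^n-x^{n+1})\|^2-\|K(\hat x-x^n)\|^2-\|K(x^{n+1}-\hat x)\|^2=-2\langle p,q\rangle$, and since $\|p+q\|^2\ge0$ forces $-2\langle p,q\rangle\le\tfrac12\|p-q\|^2=\tfrac12\|K(x^n-x^{n+1})\|^2$, the effective coefficient of $K^TK$ drops to $\tfrac12$. Combined with the $B$-difference term this yields $\langle x^n-x^{n+1},(\tfrac12K^TK+B^TB-I)(x^n-x^{n+1})\rangle\le0$, which is the content of the second hypothesis. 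This absorption is what forces the $\tfrac12$ in the stated bound and is, I think, the one genuinely nonroutine step.

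Finally I would harvest convergence. Monotonicity and nonnegativity make $\Phi_n$ convergent; telescoping the per-step inequality shows $\sum_n\|x^{n+1}-x^n\|^2$, $\sum_n\|w^{n+1}-w^n\|^2$ and $\sum_n\|Bx^{n+1}-b\|^2$ are finite, so all three tend to zero, and hence $v^{n+1}-v^n=-\tfrac1\alpha(Bx^{n+1}-b)\to0$. The bound $\Phi_n\le\Phi_0$ keeps $(x^n,w^n,v^n)$ bounded, so in the finite-dimensional setting Bolzano--Weierstrass yields a subsequence converging to some $(x^\dagger,w^\dagger,v^\dagger)$. Passing to the limit in (\ref{theoalg}) along this subsequence --- using $\bar x^{n+1}=x^{n+1}+A^T(w^{n+1}-w^n)$, the vanishing of the increments, and continuity of $\mathbb{P}_\lambda$ --- shows the limit solves (\ref{vareq}). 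To upgrade from a subsequence to the full sequence I would re-center the entire argument at $(\hat x,\hat w,\hat v)=(x^\dagger,w^\dagger,v^\dagger)$: the corresponding energy is nonincreasing and has a subsequence tending to $0$, hence tends to $0$, forcing $(x^n,w^n,v^n)\to(x^\dagger,w^\dagger,v^\dagger)$. Theorem~\ref{theorem1} then identifies $x^\dagger$ as a minimizer of (\ref{functional}).
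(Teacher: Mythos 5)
Your proposal is correct and follows essentially the same route as the paper's proof: the same substitution of a solution $(\hat x,\hat w,\hat v)$ into Lemma~\ref{lemma3} combined with Lemma~\ref{lemmagap}, the same energy (the paper writes it as $\|U(\hat x-x^n)\|^2+\|V(\hat w-w^n)\|^2+\alpha\|\hat v-v^n\|^2$ with $U^TU=\id-B^TB$, $V^TV=\id-AA^T$, rather than as your explicit quadratic form), the same parallelogram-type absorption that reduces the coefficient of $K^TK$ to $\tfrac12$, and the same subsequence-plus-recentering argument to get convergence of the whole sequence, concluded by Theorem~\ref{theorem1}. The only (inessential) omission is the paper's opening remark that nonemptiness of $\{x:\ Bx=b\}$ is what guarantees the existence of the solution $(\hat x,\hat w,\hat v)$ to (\ref{vareq}) that your argument takes as given.
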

\begin{proof}
If $\{x,\ \mathrm{with}\ Bx=b\}$ is not empty, there is a solution to (\ref{functional}),
implying that there exists a solution $(\hat x,\hat w,\hat v)$
to the variational equations (\ref{fixeq}). We now use Lemma \ref{lemma3}
with $(x,w,v)=(\hat x,\hat w,\hat v)$ and find:
\begin{displaymath}
\begin{array}{l}
\|\hat x-x^{n+1}\|^2+\|\hat w-w^{n+1}\|^2+\alpha\|\hat v-v^{n+1}\|^2
\leq\|\hat x-x^{n}\|^2+\|\hat w-w^{n}\|^2\\
\quad +\alpha\|\hat v-v^{n}\|^2-\|x^n-x^{n+1}\|^2-\|w^n-w^{n+1}\|^2
-\alpha\|v^{n+1}-v^n\|^2\\
\quad-\|K(\hat x-x^{n})\|^2+\|K(x^n-x^{n+1})\|^2-\|B(\hat x-x^{n})\|^2
+\|B(\hat x-x^{n+1})\|^2\\
\quad+\|B(x^n-x^{n+1})\|^2
-\|A^T(\hat w-w^n)\|^2+\|A^T(\hat w-w^{n+1})\|^2
\\
\quad+\|A^T(w^{n+1}-w^n)\|^2-\|K(\hat x-x^{n+1})\|^2+2\frac{1-\alpha}{\alpha}\langle B(x^{n+1}-\hat x),Bx^{n+1}-b\rangle,
\end{array}
\end{displaymath}
where we also used relation (\ref{tmp0}) with $(x,w,v)=(x^{n+1},w^{n+1},v^{n+1})$.
As $B\hat x=b$ we now use that:
\begin{displaymath}
2\frac{1-\alpha}{\alpha}\langle B(x^{n+1}-\hat x),Bx^{n+1}-b\rangle=
2\frac{1-\alpha}{\alpha}\|Bx^{n+1}-b\|^2=2\alpha(1-\alpha)\|v^{n+1}-v^n\|^2,
\end{displaymath}
and:
\begin{displaymath}
\begin{array}{lcl}
-\|K(\hat x-x^n)\|^2-\|K(x^{n+1}-\hat x)\|^2 &=&-\frac{1}{2}\|K(x^{n+1}-x^n)\|^2\\
&&\qquad\qquad\qquad -\frac{1}{2}\|K(2\hat x-x^{n+1}-x^n)\|^2\\[2mm]
&\leq& -\frac{1}{2}\|K(x^{n+1}-x^n)\|^2,
\end{array}
\end{displaymath}
and reorder to obtain:
\begin{displaymath}
\begin{array}{l}
\|\hat x-x^{n+1}\|^2-\|B(\hat x-x^{n+1})\|^2+\|\hat w-w^{n+1}\|^2
-\|A^T(\hat w-w^{n+1})\|^2\\
\qquad +\alpha\|\hat v-v^{n+1}\|^2\leq  \|\hat x-x^n\|^2-\|B(\hat x-x^n)\|^2+\|\hat w-w^n\|^2\\
\qquad\qquad -\|A^T(\hat w-w^n)\|^2+\alpha\|\hat v-v^n\|^2\\
\qquad\qquad -\Big(\|x^{n+1}-x^n\|^2-\frac{1}{2}\|K(x^{n+1}-x^n)\|^2-\|B(x^n-x^{n+1})\|^2\Big)\\
\qquad\qquad -\Big(\|w^{n+1}-w^n\|^2-\|A^T(w^{n+1}-w^n)\|^2\Big)\\
\qquad\qquad -\alpha(2\alpha-1)\|v^{n+1}-v^n\|^2.
\end{array}
\end{displaymath}
As we assume that $\|AA^T\|<1$ and $\|\frac{1}{2}K^TK+B^TB\|<1$ we can introduce regular
square matrices $L$, $U$ and $V$ by $L^TL=1-\frac{1}{2}K^TK-B^TB$, $U^TU=1-B^TB$ and $V^TV=1-AA^T$ to find:
\begin{equation}
\begin{split}
\|U(\hat x-x^{n+1})\|^2&+\|V(\hat w-w^{n+1})\|^2+\alpha\|\hat v-v^{n+1}\|^2\\
\leq&\|U(\hat x-x^{n})\|^2+\|V(\hat w-w^{n})\|^2+\alpha\|\hat v-v^n\|^2\\
& -\|L(x^n-x^{n+1})\|^2-\|V(w^n-w^{n+1})\|^2\\
&-\alpha(2\alpha-1)\|v^{n+1}-v^n\|^2.
\end{split}
\end{equation}
Summing from $M$ to $N>M$ one finds:
\begin{equation}
\begin{split}
\|U(\hat x-x^{N+1})\|^2&+\|V(\hat w-w^{N+1})\|^2+\alpha\|\hat v-v^{N+1}\|^2\\
\leq&\|U(\hat x-x^M)\|^2+\|V(\hat w-w^M)\|^2+\alpha\|\hat v-v^M\|^2\\
& -\sum_{n=M}^N\Big(\|L(x^n-x^{n+1})\|^2+\|V(w^n-w^{n+1})\|^2 \\
&\qquad\qquad+\alpha(2\alpha-1)\|v^{n+1}-v^n\|^2\Big).
\label{temp1}
\end{split}
\end{equation}
Since $\alpha>\frac{1}{2}$ the summation on the right hand side is negative.
As $U$ and $V$ are invertible, it follows that the sequence $(x^n,w^n,v^n)$
is bounded. And, as we work in a finite dimensional space, there is a convergent subsequence
$(x_{n_j},w_{n_j},v_{n_j})\stackrel{j\rightarrow\infty}{\rightarrow}
(x^\dagger,w^\dagger,v^\dagger)$. It also follows
from inequality (\ref{temp1}) that:
\begin{displaymath}
\begin{split}
\sum_{n=M}^N&\Big(\|L(x^n-x^{n+1})\|^2+\|V(w^n-w^{n+1})\|^2
+\alpha(2\alpha-1)\|v^{n+1}-v^n\|^2\Big)\\
&\leq\|U(\hat x-x^M)\|^2
+\|V(\hat w-w^M)\|^2+\alpha\|\hat v-v^M\|^2
\end{split}
\end{displaymath}
As $\alpha>\frac{1}{2}$, $\|L(x^n-x^{n+1})\|^2$, $\|V(w^n-w^{n+1})\|^2$ and $\|v^{n+1}-v^n\|^2$
tend to zero for large $n$, which implies that $\|x^n-x^{n+1}\|^2$ and
$\|w^n-w^{n+1}\|^2$ tend to zero as well. It follows that the subsequence
$(x_{n_j+1},w_{n_j+1},v_{n_j+1})$ also converges to
$(x^\dagger,w^\dagger,v^\dagger)$ and that $(x^\dagger,w^\dagger,v^\dagger)$
satisfies the fixed-point equations (\ref{fixeq}). We can
therefore choose $(\hat x, \hat w, \hat v)=(x^\dagger,w^\dagger,v^\dagger)$ in
relation (\ref{temp1}) to find:
\begin{displaymath}
\begin{array}{l}
\|U(x^\dagger-x^{N+1})\|^2+\|V(w^\dagger-w^{N+1})\|^2+\alpha\|v^\dagger-v^{N+1}\|^2\\
\qquad\qquad\qquad\leq\|U(x^\dagger-x^M)\|^2
+\|V(w^\dagger-w^M)\|^2
+\alpha\|v^\dagger-v^M\|^2
\end{array}
\end{displaymath}
for all $N>M$. As there is a convergent subsequence of
$(x^n,w^n,v^n)$, the right hand side of this expression can
be made arbitrarily small by taking $M=n_j$ large enough.
Hence the left hand side will be arbitrarily small for all
$N$ larger than this $M$. This proves convergence of the
whole sequence $(x^n,w^n,v^n)$ to $(x^\dagger,w^\dagger,v^\dagger)$.

As $(x^\dagger,w^\dagger,v^\dagger)$ satisfies the fixed-point equations, it follows from Theorem~\ref{theorem1} that $x^\dagger$ is a solution to problem (\ref{functional}).
\end{proof}

\section{Discussion}
\label{discussionsection}
\begin{itemize}

\item
If $\|\frac{1}{2}K^TK+B^TB\|\geq1$ or $\|AA^T\|\geq 1$ one can rescale the matrices and the
variables to arrive at the following iteration:
\begin{equation}
\left\{
\begin{array}{lcl}
\bar v^{n+1}&=&v^n-(Bx^n-b)\\
\bar x^{n+1}&=&x^n+\tau_1 K^T(y-Kx^n)+\tau_3 B^T\bar v^{n+1}-\tau_1 A^Tw^n\\
w^{n+1}&=&\mathbb{P}_{\lambda}(w^n+\frac{\tau_2}{\tau_1}A\bar x^{n+1})\\
x^{n+1}&=&x^n+\tau_1 K^T(y-Kx^n)+\tau_3 B^T \bar v^{n+1}-\tau_1 A^Tw^{n+1}\\
v^{n+1}&=&v^n-\frac{1}{\alpha}(Bx^{n+1}-b)
\end{array}\right.
\label{scaledalg}
\end{equation}
with step size parameters $\tau_1,\tau_2,\tau_3>0$ that satisfy
$\|\tau_1K^TK/2+\tau_3 B^TB\|<1$ and $\tau_2\|AA^T\|<1$.

\item
The $\ell_1$-norm in problem (\ref{functional}) does not necessarily have to be defined as $\|u\|_1=\sum_i|u_i|$.
In section~\ref{MEGsection} we will use an $\ell_1$-norm of the form $\|u\|_1=\sum_i^N \max(|u_{i,1}|,\ldots, |u_{i,m}|)$ for a vector $u\in \mathbb{R}^{N\times m}$. Such a penalty is useful for promoting joint sparsity on the $u_{i,j}$ (for a fixed $i$). Indeed, if e.g. $u_{i,1}$ is non-zero, then all other $u_{i,j}$ ($j\neq 1$) may be as large as $|u_{i,1}|$ as well, without increasing $\max(|u_{i,1}|,\ldots, |u_{i,m}|)$.\\
As $\lambda\max(|z_{1}|,\ldots, |z_{m}|)=\max_{\|w\|_1\leq \lambda}\langle w,z\rangle$, one needs to replace the projection $\mathbb{P}_\lambda$ in (\ref{alg}) by $N$ projections on an $\ell_1$-ball (in $\mathbb{R}^m$) of radius $\lambda$. We denote the projection on an $\ell_1$-ball of radius $\lambda$ in $\mathbb{R}^m$ by $Q_\lambda$. In algorithm (\ref{constrl1alg}), that is used for the special case $A=\id$, one has to replace the component-wise soft-thresholding $S_\lambda$ with a new thresholding function $T_\lambda=\mathrm{Id}-Q_\lambda$ (and apply it to $N$ vectors of size $m$). The operator $T_\lambda$ can be computed as follows \cite{Fornasier.Rauhut2008a}. Let $z\in\mathbb{R}^m$  and order the entries such that $|z_{i_1}|\geq |z_{i_2}|\geq\cdots\geq |z_{i_m}|$. Then:
\begin{equation}
\left\{
\begin{array}{lcl}
\mathrm{for}\ \|z\|_1\leq\lambda &:& T_\lambda(z)=0\\[3mm]
\mathrm{for}\ \|z\|_1> \lambda &:&
\left\{
\begin{array}{lcl} (T_\lambda(z))_{i_j}=\operatorname{sgn}(z_{i_j})(\sum_{k=1}^l|z_{i_k}|-\lambda)/l &&j=1,\dots,l\\[2mm]
(T_\lambda(z))_{i_j}=z_{i_j} && j=l+1,\dots,m
\end{array}
\right.
\end{array}
\right.
\label{Tdef}
\end{equation}
where $l\in\{1,\dots,m\}$ is the largest index satisfying
$|z_{i_l}|\geq(\sum_{k=1}^l|z_{i_k}|-\lambda)/l$.

\item
The $\ell_1$-norm in functional (\ref{functional}) can be replaced by a convex lower semi-continuous function $H$:
\begin{equation}
\hat x=\arg\min_{Bx=b}\|Kx-y\|^2+2H(Ax).
\label{functional3}
\end{equation}
(assuming a minimizer exists).
The projection operator $\mathbb{P}_{\lambda}$ in algorithm (\ref{alg}) then needs to be replaced by the proximity operator of  the convex conjugate of $H$, $H^\ast$, defined by $H^\ast(w)=\sup_x\{\langle w,x\rangle-H(x)\}$ (see e.g. \cite{Combettes.Pesquet2011}):
\begin{equation}
\left\{
\begin{array}{lcl}
\bar v^{n+1}&=&v^n-(Bx^n-b)\\
\bar x^{n+1}&=&x^n+K^T(y-Kx^n)+B^Tv^{n+1}-A^Tw^n\\
w^{n+1}&=&\mathrm{prox}_{H^\ast}(w^n+A\bar x^{n+1})\\
x^{n+1}&=&x^n+K^T(y-Kx^n)+B^T \bar v^{n+1}-A^Tw^{n+1}\\
v^{n+1}&=&v^n-\frac{1}{\alpha}(Bx^{n+1}-b),
\end{array}\right.
\label{genalg}
\end{equation}
which converges under the same conditions as in Theorem~\ref{theorem2} to a minimizer of problem (\ref{functional3}).
The proximity operator of $H$ is defined as $\mathrm{prox}_H(u)=\arg\min_wH(w)+\|w-u\|^2/2$, and $\mathrm{prox}_{H^\ast}=\mathrm{Id}-\mathrm{prox}_{H}$.
It is important to remark that only the proximity operator $\mathrm{prox}_{H}$ of $H$ is needed, not the proximity operator of $H(A\cdot)$.
In fact the convergence of algorithm (\ref{gista}) (i.e. without the linear constraints $Bx=b$) was proven in this more general context in \cite{Loris.Verhoeven2011}.

\item
The functional $\mathcal{F}(x)=\|Kx-y\|^2+2\lambda \|Ax\|_1$, evaluated in the iterates $x^n$, does not decrease monotonically as a function of $n$. Because the iterates $x^n$ do not necessarily satisfy the constraint in every step, it is even possible that $\mathcal{F}(x^n)<\mathcal{F}(\hat x)$ for some $n$. The constraint $Bx=b$ is only satisfied in the limit $n\rightarrow\infty$.

\item
If one wants to solve the $\ell_1$-norm constrained problem
\begin{equation}
\hat x=\arg\min_{Bx=b, \|x\|_1\leq R}\|Kx-y\|^2
\end{equation}
instead of the $\ell_1$-norm penalized problem (\ref{constrproblem}), then one may replace the soft-thresholding $\mathbb{S}_{\lambda}$ in algorithm (\ref{constrl1alg}) by projection on
the $\ell_1$-ball. The algorithm is:
\begin{equation}
\left\{
\begin{array}{lcl}
\bar w^{n+1}&=&w^n-(Bx^n-b)\\
x^{n+1}&=&Q_R\left(x^n+K^T(y-Kx^n)+B^T \bar w^{n+1}\right)\\
w^{n+1}&=&w^n-\frac{1}{\alpha}(Bx^{n+1}-b),
\end{array}
\right.   \label{projalg}
\end{equation}
where $Q_R$ is the projection on the $\ell_1$-ball of radius $R$ (such a projection is explicitly doable by computer; see expression (\ref{Tdef}), with $\lambda$ replaced by $R$, and the paragraph above). This algorithm converges for $\|\frac{1}{2}K^TK+B^TB\|<1$ and $\alpha>1/2$. This can be shown by using Lemma~\ref{lemmau} (for an $\ell_1$-ball instead of an $\ell_\infty$-ball) and proceeding in the same way as in Theorem~\ref{theorem2} (without proof).

\item
In the special case when $A=\id$, the algorithm (\ref{constrl1alg}) could also have been obtained from algorithm ($A_0$), formula (3.2) of \cite{Zhang.Burger.ea2011} (it wasn't done explicitly). This can be achieved by the choice $H\rightarrow\frac{1}{2}\|Kx-y\|^2$, $A\rightarrow B$, $J\rightarrow\|\cdot\|_1$ and $Q_0\rightarrow\id-K^TK-B^TB$ and $C\rightarrow\alpha$ in \cite{Zhang.Burger.ea2011}.
However, the assumption in \cite{Zhang.Burger.ea2011} that $Q_0$ is positive definite
(necessary to prove the convergence of the algorithm),
amounts to having $\|K^TK+B^TB\|<1$. We
have shown here that the condition $\|\frac{1}{2}K^TK+B^TB\|<1$ is
already sufficient to guarantee convergence.

As already mentioned, when $A=\id$ and $K=0$ the proposed algorithm (\ref{alg}) reduces to the algorithm (\ref{BPalg}) for the problem (\ref{BPproblem}). This algorithm was also (re)derived in \cite{Zhang.Burger.ea2011} under a slightly different form (see equation (5.6) of \cite{Zhang.Burger.ea2011}).
\end{itemize}

\section{Application to magneto-encephalography}
\label{MEGsection}

The goal of magneto-encephalography (MEG) is to determine a current
density $\vec{J}$ in the brain by measuring (a component of) the
magnetic field $\vec{B}$ induced by $\vec{J}$, in several points
outside the scalp. We assume that $\vec{B}$ and $\vec{J}$ are linked by the Biot-Savart law:
\begin{equation}
\vec{B}(\vec{r})=\frac{\mu_0}{4\pi}\int_V\vec{J}(\vec{r}\,')\times
\frac{\vec{r}-\vec{r}\,'}{\left|\vec{r}-\vec{r}\,'\right|^3}\,\dd V',
\label{equ:bios}
\end{equation}
with $\mu_0=4\pi\times 10^{-7}\mathrm{Vs}/\mathrm{Am}$ and $V$ the
volume in which the current flows. The conservation of charges
implies that $\diver(\vec{J})=0$.
For more information, see \cite{DPT:2001} and references therein.

In this section, we pose and solve a synthetic inverse problem inspired
by this problem. We consider a thin spherical shell $V$ centered at
the origin, with an outer radius of 9cm and  a thickness of 1mm.
Measurements are made at 500 random points $\vec{r}_i$ ($i:1\ldots500$) uniformly distributed
on the upper hemisphere with a distance of 10cm to the origin. The data
is composed of the radial component of the magnetic field
$B_r(\vec{r})$ measured in these $500$ points. For simplicity all the current densities considered below do not depend on the radius and do not have a radial component. In other words, we treat a 2D problem.

In order to discretize the problem, we use the ``cubed sphere'' parametrization introduced in \cite{RIP:1996}; it maps the sphere to the six sides of a cube and a regular grid is then used
on each of these six sides. We choose a grid with $64^2$ voxels on each side.

As input model (that we will want to reconstruct) we choose the current density distribution $\vec{J}_\mathrm{in}$ shown in Figure~\ref{fig:MEGin}. This model is divergence-free by construction.
The matrix $K$ encodes the Biot-Savart law for the radial component $B_r(\vec{r}_i)=\vec{B}\cdot\vec{e}_r(\vec{r}_i)$ of $\vec{B}$ in the $500$ measurement points $\vec{r}_i$ ($i:1\ldots500$). As
\begin{equation}
\begin{array}{lcl}
\displaystyle \vec{B}(\vec{r}_i)\cdot\vec{e}_r(\vec{r}_i)&=&\displaystyle\frac{\mu_0}{4\pi}\int_V
\left(\vec{J}(\vec{r}')\times \frac{\vec{r}_i-\vec{r}'}{|\vec{r}_i-\vec{r}'|^3}\right)\cdot\vec{e}_r(\vec{r}_i)\,\mathrm{d}V'\\[4mm]
&=& \displaystyle\frac{\mu_0}{4\pi}\int_V \left(\frac{\vec{r}_i-\vec{r}'}{|\vec{r}_i-\vec{r}'|^3}\times \vec{e}_r(\vec{r}_i)\right)
\cdot\vec{J}(\vec{r}')\,\mathrm{d}V',
\end{array}
\end{equation}
we set:
\begin{equation}
(K\vec{J})_i=\!\!\!\!\sum_{\mathrm{all\ voxels}}K_{i,\mathrm{voxel}}\cdot
\vec{J}_{\mathrm{voxel}},\quad
K_{i,\mathrm{voxel}}=\frac{\mu_0}{4\pi}\int_{\mathrm{voxel}}
\frac{\vec{r}_i-\vec{r}\,'}{|\vec{r}_i-\vec{r}\,'|^3}\times\vec{e}_r(\vec{r}_i)\,\dd V'
\end{equation}
The data $y$ is obtained from the synthetic input model $\vec{J}_{\mathrm{in}}$. More precisely, the data is constructed by setting $y=K\vec{J}_{\mathrm{in}}+\epsilon$, where $\epsilon$ is Gaussian noise. We choose a noise level of $10\%$: $\|\epsilon\|=0.1\times \|K\vec{J}_{\mathrm{in}}\|$.

\begin{figure}
\centering\includegraphics[width=12cm]{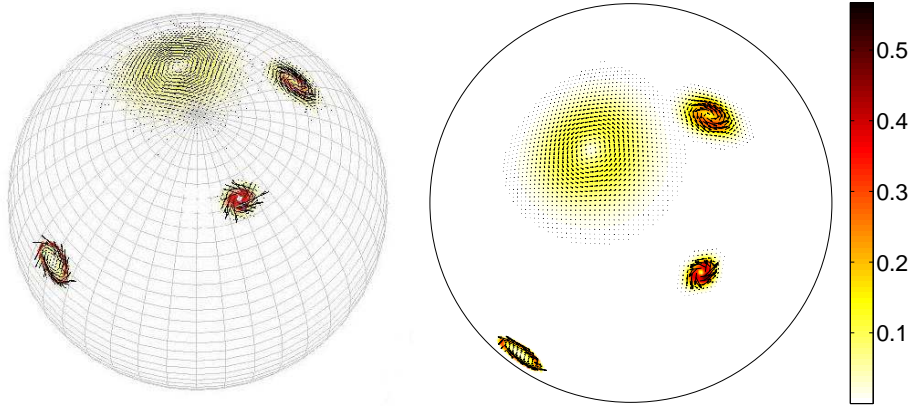}
\caption{Input model $\vec{J}_{\mathrm{in}}$ on the sphere (left) and its top view (right). The background color map is proportional to the local norm of the current density vector $\vec{J}_{\mathrm{in}}$.}
\label{fig:MEGin}
\end{figure}

Our goal now is to reconstruct the current density on each of the voxels of the cubed sphere from the noisy data $y$. The cubed sphere parametrization allows us to use a simple set of wavelet functions on the sphere introduced in \cite{Simons.Loris.ea2011}. They belong to the CDF 4-2 family \cite{CDF:1992}. As the current density has two components, we have to reconstruct $64^2\times6\times2=49152$ coefficients from merely $500$ measurements. The problem is therefore severely under-determined.

To impose the assumed sparsity of $\vec{J}$ in the wavelet basis, we will use an $\ell_1$-penalized least squares functional, with the additional linear constraint $\diver(\vec{J})=0$. We set $\vec{w}$ equal to the list of coefficients of $\vec{J}$ in the wavelet basis. Each element $\vec{w_k}$ of the list $\vec{w}$ (and $\vec{J}$) has two components $w_{k,1},w_{k,2}$ corresponding to the two angular directions on the cubed sphere.
The wavelet transform that maps the wavelet coefficients $\vec{w}$ to the current density $\vec{J}$ is represented by the operator  $W^{-1}$ so that $\vec{J}=W^{-1}\vec{w}$. In fact $W^{-1}$ works on both angular components separately.

The minimization problem we want to solve is:
\begin{equation}
\vec{w}_\mathrm{rec}=
\arg\min_{\vec{w}}\|K W^{-1}\vec{w}-y\|^2+2\lambda H(\vec{w}),
\label{for:megl1}
\end{equation}
where the reconstructed current density is $\vec{J}_\mathrm{rec}=W^{-1}\vec{w}_\mathrm{rec}$.
We consider $4$ distinct cases:
\begin{enumerate}
\def\theenumi{\alph{enumi}}
\item problem (\ref{for:megl1}) with $H(\vec{w})=\sum_{k}|w_{k,1}|+|w_{k,2}|$ without the constraint $\diver\vec{J}=0$\label{nodiv0nojoint}

\item problem (\ref{for:megl1}) with $H(\vec{w})=\sum_{k}|w_{k,1}|+|w_{k,2}|$ with the constraint $\diver\vec{J}=0$\label{div0nojoint}

\item problem (\ref{for:megl1}) with $H(\vec{w})=\sum_{k}\max\{|w_{k,1}|,|w_{k,2}|\}$ without the constraint $\diver\vec{J}=0$\label{nodiv0joint}

\item problem (\ref{for:megl1}) with $H(\vec{w})=\sum_{k}\max\{|w_{k,1}|,|w_{k,2}|\}$ with the constraint $\diver\vec{J}=0$\label{div0joint}

\end{enumerate}
(the sum over $k$ is over all voxels). We want to remark that none of the four penalties considered here are rotationally invariant

Another possibility to reconstruct $\vec{J}$ under the constraint $\diver\vec{J}=0$, is to take $\vec{J}=\curl{(G\vec{1}_r)}$, with $G$ a scalar field and to reconstruct a sparse $G$. However, even if the reconstructed $G$ has a small reconstruction error, the error on $\vec{J}$ can be much larger as a result of taking the curl.

Problems (\ref{nodiv0nojoint}) and (\ref{div0nojoint}) can be solved using algorithm (\ref{constrl1alg}) with $\mathbb{S}_{\lambda}$ given componentwise in expression (\ref{Sdef}). As there is no constraint involved in problem (\ref{nodiv0nojoint}), (\ref{nodiv0nojoint}) can be solved more efficiently with the accelerated soft-thresholding algorithm FISTA \cite{BeT:2009}. We use $2000$ iterations. The problem (\ref{div0nojoint}) is solved with $20000$ iterations of algorithm (\ref{constrl1alg}).

For method (\ref{nodiv0joint}) and (\ref{div0joint}), joint sparsity is assumed in the wavelet basis. We therefore simply replace the function $S_\lambda$ used in the algorithms for problems (\ref{nodiv0nojoint}) and (\ref{div0nojoint}) by the nonlinear operator $T_\lambda$ defined in formula (\ref{Tdef}). In other words, we use respectively $2000$ iterations of the FISTA algorithm for problem (\ref{nodiv0joint}) and $20000$ iterations of algorithm (\ref{constrl1alg}) for problem (\ref{div0joint}).

In each of the four methods, the penalty parameter $\lambda$ is chosen to fit the data to the noise level ($\|K\vec{J}_{\mathrm{rec}}-y\|=\|\epsilon\|$).  The results are displayed in Figure~\ref{fig:MEGout}. It should be emphasized that the four reconstructions minimize different functionals (with or without additional constraint). The four reconstructions are therefore not identical, not even in the limit of infinitely many iterations (four different limits).

\begin{figure}
\centering\includegraphics[width=12cm]{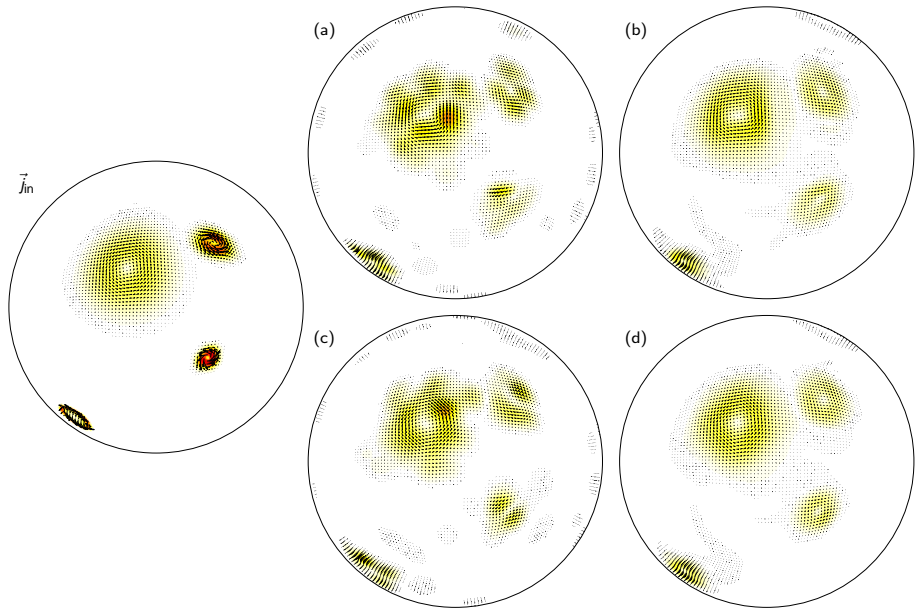}
\caption{Input model $\vec{J}_\mathrm{in}$ and four different reconstructions $\vec{J}_\mathrm{rec}$. Each current density reconstruction is the minimizer of its own functional. They are numbered according to the list in Section~\ref{MEGsection}. The color scale  is the same as in Figure~\ref{fig:MEGin}.}
\label{fig:MEGout}
\end{figure}

When comparing the results, we first conclude that the FISTA algorithm converges faster to its fixed-point than that algorithm (\ref{constrl1alg}) converges to its fixed-point. This is shown in  Figure~\ref{fig:MEGspeed} for cases (\ref{nodiv0nojoint}) and (\ref{div0nojoint}). This is to be expected as FISTA is an accelerated algorithm and  (\ref{constrl1alg}) is not.

\begin{figure}
\centering\includegraphics[width=12cm]{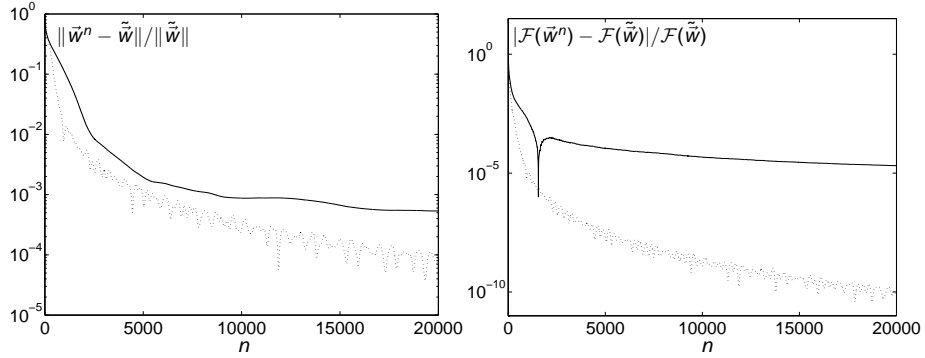}
\caption{Convergence speed of algorithm (\ref{constrl1alg}) (solid) and of FISTA (dotted) to their respective limits as a function of the number of iterations for the problems (\ref{nodiv0nojoint}) and (\ref{div0nojoint}). Left: Relative distances to the respective limits of the algorithms. Right: Difference of the functionals to the respective final values. In both cases the fixed point $\tilde{\vec{w}}$ of the iterations was obtained using $500000$ iterations in their respective algorithms. In case of constrained minimization, the functional value may go below the final limit value. This is due to the fact that the iterates do not satisfy the divergence-free constraint at every step. Again, the two algorithms solve a different minimization problem.}
\label{fig:MEGspeed}
\end{figure}

We also calculate the relative reconstruction error $e_\mathrm{rec}=\|\vec{J}_{\mathrm{in}}-\vec{J}_{\mathrm{rec}} \|/\|\vec{J}_{\mathrm{in}}\|$,
in the four cases (\ref{nodiv0nojoint})--(\ref{div0joint}). The results are displayed in Table~\ref{resulttable}.
The methods that take the constraint into account perform somewhat better than the ones that do not take the constraint into account. Although the difference is quite small, it does not seem to be a fluctuation: the same result was obtained for other input models, in the same wavelet basis as well as in other bases.

\begin{table}\centering
\begin{tabular}{l|cccc}
& \ref{nodiv0nojoint} & \ref{div0nojoint} & \ref{nodiv0joint} & \ref{div0joint} \\  \hline
$e_\mathrm{rec}$ & 0.81 & 0.75 & 0.78 & 0.71\\
$\|\diver \vec{J}_\mathrm{rec}\|$ & $0.59$ & $3.9\cdot 10^{-6}$ & $0.61$ & $8.3\cdot 10^{-6}$\\
$\#$ nonzero in $\vec{w}_\mathrm{rec}$ & $82$ & $9780$ & $194$ & $18108$
\end{tabular}
\caption{Reconstruction data for the four simulations discussed in Section~\ref{MEGsection}.}
\label{resulttable}
\end{table}

The values of $\|\diver \vec{J}\|$ for the four reconstructions are also reported in Table~\ref{resulttable}.
As expected, the constraint is far better satisfied when using methods (\ref{div0nojoint}) and (\ref{div0joint}) with algorithm (\ref{constrl1alg}), than methods (\ref{nodiv0nojoint}) and (\ref{nodiv0joint}).
One of the consequences is that the methods (\ref{div0nojoint}) and (\ref{div0joint}) give better visual results, as can be observed in Figure~{\ref{fig:MEGout}. While all four methods localize the objects in $\vec{J}$ quite well, only those corresponding to divergence-free reconstructions are well structured.

Finally, the divergence-free reconstructions are less sparse than the reconstructions that do not satisfy this constraint. For example when solving case (\ref{nodiv0nojoint}) with the FISTA algorithm, only $82$ of the $49152$ coefficients (in the wavelet basis) are nonzero, while case (\ref{div0nojoint}) solved with algorithm (\ref{constrl1alg}), gives $9780$ nonzero coefficients. This is due to the high number of linear constraints (one for every voxel).

\section{Acknowledgements}
I.L. is research associate of the Fonds de la recherche Scientifique-FNRS (Belgium).
Part of this research was done while the authors were at the Computational and Applied Mathematics Programme of the Vrije Universiteit Brussel and was supported by
VUB GOA-062 and by the Fonds voor Wetenschappelijk Onderzoek-Vlaanderen grant G.0564.09N. The authors would like to thank M.~Fornasier and F.~Pitolli for the useful discussions on MEG and the referees for their constructive comments.


\bibliography{constrl1}{}
\bibliographystyle{abbrv}

\end{document}